\newtheorem{theorem}{Theorem}[section]
\newtheorem{lemma}[theorem]{Lemma}
\newtheorem{proposition}[theorem]{Proposition}
\theoremstyle{definition}
\newtheorem{definition}[theorem]{Definition}
\newtheorem{remark}[theorem]{Remark}
\newcommand{\intvl}{\mathrm{I}}				
\newcommand{\crcle}{\mathbb{S}^1}			
\newcommand{\Sphere}{\mathbb{S}^3}			
\DeclareMathOperator{\ms}{MS}				
\DeclareMathOperator{\V}{V}				
\DeclareMathOperator*{\Int}{int}			
\DeclareMathOperator*{\nhd}{\mathcal{N}}		
\begin{document}
\title{On links with locally infinite {K}akimizu complexes}
\author{Jessica E. Banks}
\date{}
\maketitle

\begin{abstract}
We show that the Kakimizu complex of a knot may be locally infinite, answering a question of Przytycki--Schultens. We then prove that if a link $L$ only has connected Seifert surfaces and has a locally infinite Kakimizu complex then $L$ is a satellite of either a torus knot, a cable knot or a connected sum, with winding number 0.
\end{abstract}

\section{Introduction}

The Kakimizu complex $\ms(L)$ of a non-split, oriented link $L$ in $\Sphere$ records the structure of the set of minimal genus Seifert surfaces for $L$. When every minimal genus Seifert surface for $L$ is connected, $\ms(L)$ has the following description, which mirrors the definition of the curve complex of a compact surface.

\begin{definition}[\cite{MR1177053} p225]
$\ms(L)$ is a simplicial complex, the vertices of which are the ambient isotopy classes of minimal genus Seifert surfaces for $L$. Vertices $R_0,\cdots,R_n$ span an $n$--simplex exactly when they can be realised disjointly.
\end{definition}

In \cite{przytycki-2010}, Przytycki and Schultens generalise this definition as follows.

\begin{definition}
Let $M$ be a compact, connected, orientable, irreducible, $\partial$--irreducible 3--manifold. Let $\gamma$ be a union of disjoint, oriented, simple closed curves on $\partial M$ such that $\gamma$ does not separate any component of $\partial M$. Let $\alpha\in H_2(M,\partial M;\mathbb{Z})$ with $\partial\alpha=[\gamma]$.
Call an oriented surface $S$ properly embedded in $M$ a \textit{$(\gamma,\alpha)$--surface} if $[S]=\alpha$ and $\partial S$ is homotopic to $\gamma$.

The flag simplicial complex $\ms(M,\gamma,\alpha)$ is defined as follows. The set $\V(\ms(M,\gamma,\alpha))$ of vertices is defined to be the set of isotopy classes of {$(\gamma,\alpha)$--surfaces} with maximal Euler characteristic $\chi$ in their homology class. Two such surfaces $S,S'$ are joined by an edge if they can be isotoped such that a lift of $M\setminus S'$ to the infinite cyclic cover of $M$ associated to $\alpha$ intersects exactly two lifts of $M\setminus S$.
\end{definition}

\begin{remark}
Using this definition, $\ms(L)$ is ${\ms(\Sphere\setminus\Int\nhd(L),\partial R, [R])}$, where $R$ is any Seifert surface for $L$.
\end{remark}

Viewing $\ms(L)$ in terms of the infinite cyclic cover of its complement in this way has proved especially useful when considering questions about distances in $\ms(L)$. In particular, the following results are proved using this viewpoint.

\begin{theorem}[\cite{MR1177053} Theorem A]
Let $L$ be a non-split link. Then $\ms(L)$ is connected.
\end{theorem}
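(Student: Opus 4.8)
The plan is to show that any two vertices $R$, $S$ of $\ms(L)$ are joined by an edge-path, by induction on the number of components of $R\cap S$ for transverse representatives, using the infinite cyclic cover of the exterior to manufacture the intermediate surfaces. Write $X=\Sphere\setminus\Int\nhd(L)$. If $L$ is the unknot then $\ms(L)$ is a single vertex and there is nothing to prove, so assume $L$ is knotted; then, $L$ being non-split, $X$ is irreducible, and being a non-trivial link exterior it is also $\partial$--irreducible, so every minimal genus Seifert surface for $L$ is incompressible and $\partial$--incompressible --- this is the property that will keep the surgery below from raising the genus. Let $p\colon\widetilde X\to X$ be the infinite cyclic cover determined by $[R]$, with generating deck transformation $t$; the preimage $p^{-1}(R)$ is a disjoint bi-infinite family $\{R_i=t^iR_0\}_{i\in\mathbb Z}$ of incompressible surfaces cutting $\widetilde X$ into regions $\{Y_i\}$, each a copy of $X$ cut along $R$, with $R_i$ and $R_{i+1}$ the two copies of $R$ in $\overline{Y_i}$. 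For a second minimal genus Seifert surface $S$ the preimage $p^{-1}(S)=\bigcup_j S_j$ is a disjoint family, each member separating $\widetilde X$. Granting these, the theorem reduces to the claim: \emph{if $R$, $S$ are transverse minimal genus Seifert surfaces with $R\cap S\ne\emptyset$, there is a minimal genus Seifert surface $R'$, either isotopic to $R$ or joined to it by an edge in $\ms(L)$, with $|R'\cap S|<|R\cap S|$.} Iterating the claim (re-choosing transverse representatives each time) yields a sequence of minimal genus surfaces along which $|{\cdot}\cap S|$ strictly decreases, terminating in one disjoint from $S$ and hence, by the first definition above, equal to $S$ or joined to it by an edge; deleting repeats gives an edge-path from $R$ to $S$.

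To prove the claim, note that since $R\cap S\ne\emptyset$ some lift $S_0$ crosses some $R_i$, so the interval $\{i:S_0\cap\overline{Y_i}\ne\emptyset\}=[a,b]$ has $b>a$. Put $F'=S_0\cap\overline{Y_a}$; since $S_0\cap R_a=\emptyset$, the boundary of $F'$ lies on $R_{a+1}\cup\partial\widetilde X$. Because $S_0$ separates $\widetilde X$, the surface $F'$ separates $\overline{Y_a}$; let $W$ be the component of $\overline{Y_a}\setminus F'$ not containing $R_a$, so that $\partial W$ consists of $F'$ together with a subsurface $F\subseteq R_{a+1}$ and arcs on $\partial\widetilde X$. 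Let $R'$ be obtained by replacing $F\subseteq R$ with a copy of $F'$ pushed slightly into $W$, carried out $t$--equivariantly and projected to $X$ --- equivalently, push $F$ across $W$. Then $\partial R'=\partial R$, and since the modification is across the region $W$ we have $[R']=[R]$, so $R'$ is a Seifert surface for $L$ in the class of $R$; hence $\chi(R')\le\chi(R)$, giving $\chi(F')\le\chi(F)$. The mirror construction, replacing $F'\subseteq S$ by a copy of $F$, produces a Seifert surface for $L$ in the class of $S$, so minimality of $S$ gives $\chi(F)\le\chi(F')$; thus $\chi(F)=\chi(F')$ and $R'$ is again of minimal genus. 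Since the modification is supported on one side of $R$, each complementary lift of $R'$ meets exactly two complementary lifts of $R$, so $R'$ is isotopic to $R$ or joined to it by an edge. Finally the curves $S_0\cap R_{a+1}=\partial F$ and their $t$--translates disappear from the intersection with $S$, and a small perturbation near $\partial F$ introduces no new ones, so $|R'\cap S|<|R\cap S|$.

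I expect the real work to lie in this surgery step rather than in the formal induction. One must choose $W$ (equivalently $F$) so that $R'$ is genuinely embedded and, if one wants the connected--surface model of $\ms(L)$, connected; one must justify carefully that ``supported on one side of $R$'' forces each complementary lift of $R'$ to meet exactly two complementary lifts of $R$, i.e.\ that $R'$ and $R$ are equal or adjacent rather than farther apart in $\ms(L)$; and one must check that the perturbation near $\partial F$ really removes the old intersections without creating new ones and that the boundary and homology bookkeeping $\partial R'=\partial R$, $[R']=[R]$ comes out cleanly. The identity $\chi(F)=\chi(F')$ is exactly the place where the non-split hypothesis is used, through incompressibility of minimal genus Seifert surfaces: without it the mirror construction on $S$ need not preserve $\chi(S)$, the inequality $\chi(F)\le\chi(F')$ can fail, $R'$ can have strictly smaller Euler characteristic, and the induction collapses.
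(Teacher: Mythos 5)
This statement is quoted from Kakimizu (Theorem A of the cited paper); the present paper gives no proof of it, so I can only compare your outline with the known argument. Your plan --- pass to the infinite cyclic cover, swap the piece of $R$ lying beyond the outermost sheet of a lift of $S$, use a mirror swap plus genus-minimality of both surfaces to see the Euler characteristic is unchanged, and descend on the intersection number --- is essentially Kakimizu's strategy, and the skeleton (including the ``meets exactly two lifts'' criterion for adjacency) is sound.

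The genuine gap is in the exchange step, precisely where you write that the mirror construction ``produces a Seifert surface for $L$ in the class of $S$.'' That assertion is not automatic and is where the real work lies. First, with $W$ chosen as you state --- ``the component of $\overline{Y_a}\setminus F'$ not containing $R_a$'' --- the construction is not even well defined: $F'=S_0\cap\overline{Y_a}$ may be disconnected, $\overline{Y_a}\setminus F'$ may have several components not containing $R_a$, and some of these can lie on the $R_a$-side of $S_0$; moreover, if $W$ is a single component, curves of $S_0\cap R_{a+1}$ can lie in the interior of $F$, and then $(S\setminus F')\cup F$ is not embedded, so minimality of $S$ cannot be invoked to get $\chi(F)\le\chi(F')$. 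The correct choice is $W=\overline{Y_a}\cap Z^+$, where $Z^+$ is the whole side of $S_0$ not containing $R_a$, so that the two swapped surfaces are the frontiers of $Z^+\cup B$ and $Z^+\cap B$ (with $B$ the side of $R_{a+1}$ away from $R_a$). Even then you must prove, and you nowhere do, the ordering facts on which everything rests: that the translates $t^nS_0$ are coherently ordered by the deck transformation, so that no other lift of $S$ meets $W$ (this is also what makes your final count $|R'\cap S|<|R\cap S|$ legitimate --- your ``small perturbation'' remark ignores other sheets of $p^{-1}(S)$ near $F'$), and that the translates of the two swapped surfaces are disjoint from them, so that both project to embedded surfaces in $X$. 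Second, embeddedness is not enough: you need the swapped surfaces to be coherently oriented with boundary the correct oriented longitudes (the swap must be the orientation-respecting resolution along each curve of $\partial F$, which is exactly why $W$ must be the full far side of $S_0$), and you need to rule out, or deal with, closed and extra components of $R'$ before you may call it a minimal genus Seifert surface and a vertex of $\ms(L)$ --- you flag this but offer no argument. Finally, the closing remark that $\chi(F)=\chi(F')$ rests on incompressibility is off the mark: the inequality comes from genus-minimality of $R$ and $S$ applied to the two swapped Seifert surfaces; non-splitness enters through irreducibility of the exterior and the structure of the cover, not through $\partial$-incompressibility. As it stands the proposal is a correct plan with the decisive lemma (the lift-ordering and embeddedness/orientation of the exchanged surfaces) assumed rather than proved.
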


\begin{theorem}[\cite{MR2531146} Theorem 1.1]\label{diametertheorem}
Let $K$ be a knot in $\Sphere$ that is not a satellite. Then the diameter of $\ms(K)$ is bounded above by $2g(K)(3g(K)-2)+1$, where $g(K)$ denotes the genus of $K$.
\end{theorem}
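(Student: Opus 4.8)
The plan is to pass to the infinite cyclic cover and reformulate the distance between two minimal genus Seifert surfaces as a count of ``levels'', and then to bound that count using the absence of essential annuli and tori that follows from $K$ not being a satellite. Write $M=\Sphere\setminus\Int\nhd(K)$ and let $p\colon\widetilde M\to M$ be the infinite cyclic cover associated to $[R]$, with deck group generated by $t$. Given a minimal genus Seifert surface $R$, fix a lift $\widetilde R\subset\widetilde M$; after an isotopy the translates $\{t^n\widetilde R\}_{n\in\mathbb Z}$ are pairwise disjoint and cut $\widetilde M$ into fundamental domains (``slabs'') $Y_n$, each a copy of $M$ cut along $R$. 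The first step is to record the reformulation of distance implicit in the definition of an edge: for a second minimal genus Seifert surface $R'$ one has $d_{\ms(K)}(R,R')\le k$, where $k$ is the number of slabs $Y_n$ met by a chosen lift $\widetilde{R'}$ --- a block of consecutive slabs, since $\widetilde{R'}$ is connected. This is proved by the oriented double-curve-sum (``cut-and-paste'') construction used to establish connectedness of $\ms(K)$: when $k>1$ one replaces the portion of $R'$ lying in the outermost slab it meets by a subsurface of $R$, obtaining a minimal genus Seifert surface $R''$ with $d_{\ms(K)}(R',R'')\le 1$ whose lift meets one fewer slab of $R$; iterating reaches a surface disjoint from, hence at distance at most $1$ from, $R$. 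So it suffices to bound $k$ uniformly over all pairs of minimal genus Seifert surfaces.

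For the bound on $k$, put $R$ and $R'$ in taut position, so that $R\cap R'$ is a family of circles essential on both surfaces and admitting no further reducing isotopy; this is where the hypothesis that both surfaces have maximal Euler characteristic in their homology class is used. The circles $R\cap R'$ cut $R'$ into pieces, each of which embeds in $M$ cut along $R$ and so, after lifting, lies in a single slab; since $\widetilde{R'}$ meets $k$ consecutive slabs, $k$ is at most the number of pieces of $R'$. By incompressibility and tautness no piece is a disc, so the pieces of negative Euler characteristic number at most $-\chi(R')=2g-1$, and the remaining pieces are essential annuli in $M$ cut along $R$. Now I invoke the hypothesis: a non-satellite knot is either a torus knot --- in which case the minimal genus Seifert surface is unique, $\ms(K)$ is a point, and the bound is trivial --- or hyperbolic, so that $M$ is anannular and atoroidal. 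Anannularity controls the annular pieces: a long nested family of essential annuli in $M$ cut along $R$ would, upon projecting to $M$ and using irreducibility and $\partial$--irreducibility of $M$, produce an essential annulus or torus in $M$. Bounding how many annular pieces can occur and how they can be distributed among the slabs relative to the non-annular ones, and tracking the additive constant coming from the boundary, yields $k\le 2g(3g-2)+1$.

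The main obstacle I foresee is the quantitative version of the last step and the exact constant. Qualitatively, anannularity of a hyperbolic $M$ already gives that there are only finitely many isotopy classes of incompressible surfaces, so $\ms(K)$ is finite and of finite diameter; the work lies in replacing ``finite'' by the stated quadratic-in-$g$ estimate. This forces one to use anannularity and atoroidality quantitatively --- essentially to bound the geometric intersection number of two taut surfaces in an atoroidal Haken manifold by (a constant times) the product of their complexities --- and then to account carefully for how annular ``product'' pieces can be stacked between consecutive non-annular pieces, which is what pushes the bound from $\sim(2g-1)^2$ up to $2g(3g-2)+1$. A secondary point is to verify rigorously that the surface $R''$ produced by the cut-and-paste is genuinely of minimal genus and genuinely drops the slab count by one; this is the relative analogue of Kakimizu's connectedness argument and must be carried out with the taut position for the pair in hand.
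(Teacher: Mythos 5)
This statement is quoted by the paper from Sakuma--Shackleton \cite{MR2531146} and is not proved in the paper, so your argument has to stand on its own; as written it does not. The genuine gap is that the entire quantitative content of the theorem is asserted rather than proved: after reducing to bounding the number $k$ of slabs met by a lift of $R'$, and observing that the pieces of $R'$ cut along $R\cap R'$ comprise at most $2g-1$ pieces of negative Euler characteristic plus some annuli, you write that ``bounding how many annular pieces can occur and how they can be distributed \dots\ yields $k\le 2g(3g-2)+1$'' --- but that sentence \emph{is} the theorem, and you yourself identify it as the main obstacle. Moreover, the sketch you give for it would not work as stated: an essential annulus in $M$ cut along $R$ need not project to an essential annulus or torus in $M$ (the complementary sutured manifold of a taut Seifert surface typically contains many product annuli), so ``anannularity controls the annular pieces'' requires a genuinely different argument. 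The actual proof in \cite{MR2531146} runs the count on the intersection curves themselves: a family of curves of $R\cap R'$ that are parallel on \emph{both} surfaces produces an essential torus or annulus in $M$, which atoroidality forbids beyond a bounded multiplicity; combined with the bound $3g-2$ on the number of parallel classes of disjoint essential curves on a once-punctured genus-$g$ surface this gives $|R\cap R'|\le 2g(3g-2)$, and a separate estimate bounds the Kakimizu distance by the intersection number plus $1$. None of this counting appears in your proposal.

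Two secondary points. First, your claim that hyperbolicity of $M$ ``already gives that there are only finitely many isotopy classes of incompressible surfaces'' is false without fixing the Euler characteristic; the correct statement (finitely many minimal genus Seifert surfaces for a non-satellite knot) is the one the paper cites, and in any case finiteness of the vertex set does not by itself yield the stated diameter bound. Second, the initial reduction --- that the cut-and-paste surface $R''$ is again of minimal genus, lies at distance at most $1$ from $R'$, and meets exactly one fewer slab --- is itself a nontrivial relative version of Kakimizu's connectedness/distance argument, and you flag it rather than carry it out; relatedly, since $R$ and $R'$ have boundary on $\partial M$, you must first arrange their boundaries to be disjoint longitudes (or deal with arcs of intersection) before asserting that $R\cap R'$ consists of circles essential on both surfaces. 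Until the quantitative step and these reductions are supplied, the proposal is a plausible outline but not a proof of the bound $2g(K)(3g(K)-2)+1$.
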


\begin{theorem}[\cite{przytycki-2010} Theorem 1.1]
If $M,\gamma,\alpha$ are as above, $\ms(M,\gamma,\alpha)$ is contractible.
\end{theorem}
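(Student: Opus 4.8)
The plan is to fix a basepoint vertex $S_0$ and to build a canonical ``combing'' of $\ms(M,\gamma,\alpha)$ towards $S_0$ inside the infinite cyclic cover $p\colon\widetilde M\to M$ associated to $\alpha$, with generating deck transformation $t$. The first ingredient I would invoke is a Kakimizu-type normal form: any $(\gamma,\alpha)$-surface $S$ of maximal Euler characteristic can be isotoped so that $p^{-1}(S)$ is a disjoint union of surfaces cutting $\widetilde M$ into a $t$-cyclically-ordered bi-infinite sequence of blocks, and for any two such $S,S'$ there is a position, unique up to isotopy, minimising $|S\cap S'|$ (equivalently, minimising the number of blocks of $\widetilde M\setminus p^{-1}(S)$ met by one block of $\widetilde M\setminus p^{-1}(S')$). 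With this language the edge relation of $\ms(M,\gamma,\alpha)$ says exactly that some block of $\widetilde M\setminus p^{-1}(S')$ meets precisely two blocks of $\widetilde M\setminus p^{-1}(S)$, and Kakimizu's distance formula expresses $d_{\ms}(S,S')$ in terms of this block-crossing number.

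The core construction would be an equivariant oriented cut-and-paste. Given $S$, put it in minimal position with respect to $S_0$, lift both to $\widetilde M$, and resolve every curve and arc of $p^{-1}(S)\cap p^{-1}(S_0)$ in the way compatible with the $\alpha$-co-orientations and with $t$, so as to obtain the surface $S\wedge S_0$ that is ``closest to $S_0$'' among all such resolutions --- concretely, the frontier of the union of those complementary halfspaces of $p^{-1}(S)$ that are contained in the corresponding halfspace of $p^{-1}(S_0)$, pushed back down to $M$. I would then establish three properties: (i) $S\wedge S_0$ is again a $(\gamma,\alpha)$-surface of maximal Euler characteristic; (ii) $S\wedge S_0$ is joined to $S$ by an edge (or equals $S$), and $d_{\ms}(S\wedge S_0,S_0)=\max\{0,\,d_{\ms}(S,S_0)-1\}$, with $S\wedge S_0$ lying on a geodesic edge-path from $S$ to $S_0$; and (iii) the operation $\,\cdot\wedge S_0$ is sufficiently monotone that, for a simplex spanned by $S_1,\dots,S_k$, the combing paths of $S_1,\dots,S_k$ sweep out a subdivided prism lying inside $\ms(M,\gamma,\alpha)$.

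Granting (i)--(iii), the homotopy is assembled in the usual way: on each vertex $S$ one has a geodesic edge-path $S=S^{(0)},S^{(1)},\dots,S^{(d)}=S_0$ of length $d=d_{\ms}(S,S_0)$ obtained by iterating $\,\cdot\wedge S_0$ (the distance to $S_0$ strictly drops until it is $0$, i.e. until the surface is $S_0$), and by (iii) these paths fit together over simplices to give, after an iterated barycentric subdivision, a simplicial homotopy from the identity of $\ms(M,\gamma,\alpha)$ to the constant map at $S_0$. Because every choice made along the way is canonical up to isotopy, the construction is globally consistent, so $\ms(M,\gamma,\alpha)$ deformation retracts to the vertex $S_0$ and is contractible.

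The hard part will be (i): proving that the equivariant oriented resolution does not decrease the Euler characteristic. This is exactly where all the hypotheses on $M$ are used --- one would cut $\widetilde M$ along the resolved surface, use irreducibility and $\partial$-irreducibility to discard trivial curves, arcs and discs of intersection before resolving, and appeal to the fact that $S$ and $S_0$ are norm-minimising (``taut'') to see that the recombined pieces lose no $\chi$ and admit no compression or $\partial$-compression. Verifying the monotonicity and the simplex-prism claim in (iii), keeping careful track of the orientations of all the blocks in the cover, is the secondary difficulty; the normal-form and uniqueness input in the first paragraph is standard (due to Kakimizu for Seifert surfaces) but must be set up with care for the general $(\gamma,\alpha)$ setting.
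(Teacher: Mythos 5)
This statement is quoted from \cite{przytycki-2010} and is not proved anywhere in the paper, so there is no in-paper argument to compare against; the only meaningful comparison is with Przytycki--Schultens' published proof. Your outline does follow the same broad strategy as that proof: pass to the infinite cyclic cover, work with the half-spaces bounded by lifts of the taut surfaces, resolve intersections by an oriented cut-and-paste compatible with the co-orientations, use additivity of Euler characteristic together with norm-minimality and irreducibility to see that the resolved surface is again a vertex, and comb everything towards a base vertex while controlling Kakimizu's distance. So the approach is not a wrong one.

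As a proof, however, the proposal has genuine gaps, because items (i)--(iii) are exactly the substance of the theorem and are asserted rather than established. Concretely: in (ii), resolving ``towards $S_0$'' as described does not obviously produce a surface that is adjacent to $S$ with distance to $S_0$ dropping by exactly one; to get both one must resolve with a carefully chosen translate $t^k$ of the half-space of $S_0$ (the extreme translate still meeting the chosen lift of $S$), and the verification needs Kakimizu's distance formula together with uniqueness of minimal position, which is itself a nontrivial product-region statement (compare Proposition 2.2 of this paper) rather than a routine normal-form fact in the general $(\gamma,\alpha)$ setting. In (iii), a vertex map that does not increase distance to $S_0$ does not by itself give a deformation retraction: one must show that the resolutions of the vertices of a simplex are pairwise adjacent-or-equal and adjacent to the original vertices, so that, using that $\ms(M,\gamma,\alpha)$ is a flag complex, the map is simplicial and contiguous to the identity; ``sweep out a subdivided prism'' names this step without proving it, and it is where much of the real work in the published argument lies. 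Finally, in (i) the resolution can create closed or homologically trivial components which must be discarded, and one must check that what remains is a $(\gamma,\alpha)$-surface of maximal Euler characteristic; your sketch points at the right mechanism (tautness, irreducibility, $\partial$-irreducibility), but until (i)--(iii) are actually carried out the argument is an outline of the known proof rather than a proof.
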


It is known that any knot that is not a satellite has only finitely many minimal genus Seifert surfaces (see, for example, \cite{MR0440528} p329). Contrasting with this and Theorem \ref{diametertheorem}, Kakimizu has shown (\cite{MR1177053} Theorem B) that there are knots $K$ such that $\ms(K)$ has infinite diameter.
Przytycki and Schultens raise the question of whether the complex $\ms(M,\gamma,\alpha)$ can be locally infinite. In Section \ref{examplesection} we give an example that answers this question with the following result.

\begin{theorem}
$\ms$ can be locally infinite even for a knot.
\end{theorem}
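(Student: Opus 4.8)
The plan is to exhibit an explicit knot $K$ together with infinitely many pairwise non-isotopic minimal genus Seifert surfaces that all become adjacent to one fixed surface $R$ in $\ms(K)$. A natural source of such behaviour is a knot built so that its complement contains an essential annulus or torus across which one can ``twist'' a Seifert surface without changing its isotopy-distance-$1$ relationship to $R$; in view of the second theorem in the abstract, the example should be a satellite with winding number $0$, so I would start from a companion that is a connected sum (or a torus/cable knot) and a pattern of winding number $0$ in the solid torus whose Seifert surface for the satellite can be modified by Dehn twisting along the companion torus.

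Concretely, I would first describe $K$ and a specific minimal genus Seifert surface $R$, and verify that $R$ is of minimal genus (using that the genus is additive under the satellite operation with winding number $0$ only up to the genus of the pattern's surface, so one must check Euler characteristic directly, e.g. via a Bennequin-type inequality, sutured manifold theory, or an explicit incompressibility argument). Next I would construct the family $R_n$, $n \in \mathbb{Z}$, obtained from $R$ by $n$ twists along the essential torus (or annulus) in the complement; each $R_n$ is again a minimal genus Seifert surface because twisting along a torus disjoint from the relevant part of $R$ preserves Euler characteristic and the homology class. Then I would show the $R_n$ are pairwise non-isotopic — this is where the winding-number-$0$ hypothesis is used, since a nonzero winding number would let the twisting be undone in the infinite cyclic cover; here I would track the surfaces in the infinite cyclic cover of the complement and distinguish them by how their lifts sit relative to the lifts of the torus, or by an algebraic invariant such as the image of $\pi_1$ or a linking/intersection number.

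Finally, and this is the crux, I would prove that every $R_n$ is distance $1$ from $R$ in $\ms(K)$, i.e. that a lift of $\Sphere \setminus \Int\nhd(L) \setminus R_n$ to the infinite cyclic cover meets exactly two lifts of the complement of $R$. The point is that the twisting happens ``in a region away from $R$'': if the twisting torus is chosen so that it meets $R$ in a single essential curve and $R_n$ agrees with $R$ outside a neighbourhood of that torus, then in the infinite cyclic cover the complementary region of $R_n$ still spans exactly one fundamental-domain gap, so the edge condition holds for every $n$ simultaneously. I expect the main obstacle to be precisely the simultaneous verification of these two competing requirements: the twisting must be drastic enough to make the $R_n$ genuinely non-isotopic (so the vertex set is infinite), yet mild enough — localized near a torus disjoint from ``most of'' $R$ — that the Kakimizu adjacency to $R$ survives all $n$ at once. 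Getting an honest handle on isotopy classes of Seifert surfaces, rather than just homology classes, is the technical heart, and I would lean on Kakimizu's description of $\ms$ via the infinite cyclic cover together with a careful picture of the satellite construction to push it through.
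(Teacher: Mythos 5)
Your overall strategy --- take a winding number $0$ satellite and produce an infinite family of minimal genus Seifert surfaces by Dehn twisting along the companion torus --- is the same as the paper's, which uses a twisted Whitehead double of the trefoil. But as written the proposal has a genuine gap at both of its crucial steps. First, the adjacency mechanism you describe does not work: you ask for the companion torus $T$ to meet $R$ in a single essential curve, with $R_n$ agreeing with $R$ away from $T$. For a winding number $0$ satellite the class of $R\cap T$ in $H_1(T)$ is trivial (it bounds on both sides), so $R\cap T$ can never be a single essential curve, and in any case twisting a surface along a torus that it meets gives no evident control on the Kakimizu edge condition. The paper avoids this entirely: the fixed surface $R$ (the obvious genus $1$ surface of the double) lies inside the companion solid torus, disjoint from $T$; the family is built from a \emph{different} surface $R_0$, obtained by rerouting an annulus of $R$ across $T$ through the boundary annulus of a neighbourhood of a M\"obius band in the trefoil exterior, so that $R_0$ meets $T$ in two oppositely oriented longitudes. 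Setting $R_n=\Psi^n(R_0)$ for the twist $\Psi$ along $T$, every $R_n$ is genus $1$ (hence automatically minimal genus, so your worries about verifying minimality evaporate) and disjoint from $R$, and disjointness is exactly the adjacency criterion here, so no infinite cyclic cover computation is needed for the edge condition.

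Second, and this is the technical heart, you leave the pairwise non-isotopy of the $R_n$ (and $R_n\neq R$) to an unspecified invariant such as the image of $\pi_1$ or a linking/intersection number. Precisely because the winding number is $0$, homological and linking-type invariants do not see the twisting, and ``tracking lifts in the infinite cyclic cover'' is not an argument until you say what distinguishes them. The paper's proof of distinctness is the real content: it invokes the product-region criterion (Proposition \ref{productregionsprop}), showing by inspection of the complementary sutured pieces that none is a product region, which yields both that $R_n$ cannot be isotoped off $T$ (hence $R_n\neq R$, since $R$ is disjoint from $T$) and that $R_0$ cannot be made disjoint from a parallel copy of $R_m$ for $m\neq 0$ (hence $R_n\neq R_m$). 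Without an argument of this kind your construction only produces infinitely many candidate surfaces, not infinitely many vertices of $\ms(K)$.
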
 

In Section \ref{conversesection} we prove the following condition on the types of links that might have a locally infinite Kakimizu complex, under the additional assumption that all minimal genus Seifert surfaces for the link are connected. Note that such a link cannot be split.

\begin{restatable}{theorem}{satellitethm}\label{satellitetheorem}
Let $L$ be an oriented link such that every minimal genus Seifert surface for $L$ is connected. If $\ms(L)$ is locally infinite then $L$ is a satellite of either a torus knot, a cable knot or a connected sum, with winding number 0.
\end{restatable}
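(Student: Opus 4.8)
The plan is to turn local infiniteness into the existence of infinitely many incompressible surfaces of one fixed topological type inside a single cut-open manifold, use characteristic-submanifold theory to locate an essential Seifert fibered piece there, and then reassemble in the link exterior to produce a companion torus and identify its type.

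Fix a vertex of $\ms(L)$ of infinite valence, represented by a minimal genus Seifert surface $R$, and cut $M=\Sphere\setminus\Int\nhd(L)$ along $R$ to obtain a compact orientable irreducible $3$--manifold $Y$ (irreducible because $L$ is non-split, which holds since its minimal genus Seifert surfaces are connected) whose boundary contains two copies $R^+,R^-$ of $R$, incompressible in $Y$. Unwinding the definition of the edge relation via the infinite cyclic cover, together with Kakimizu's analysis of minimal position, a minimal genus Seifert surface $R'$ is adjacent to $R$ exactly when it can be isotoped off $R$ so that, after cutting $M$ along $R$, it becomes a connected incompressible surface $S\subset Y$ that separates $R^-$ from $R^+$, is homologous to $R^+$, and has $\chi(S)=\chi(R)$; such an $S$ is moreover $\partial$--incompressible, being a minimal genus Seifert surface. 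Conversely any such $S$ reglues to a minimal genus Seifert surface adjacent to $R$. Since regluing is well defined on isotopy classes and its image contains the infinite set of neighbours of $R$, the manifold $Y$ carries infinitely many pairwise non-isotopic connected, incompressible, $\partial$--incompressible surfaces $S_1,S_2,\dots$, all homeomorphic to $R$ and all separating $R^-$ from $R^+$.

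Since $\ms(L)$ is locally infinite, $L$ is not fibred (a fibred link has a unique minimal genus Seifert surface), so $Y$ is neither Seifert fibered nor an $I$--bundle and its characteristic submanifold $\Sigma(Y)$ is a proper submanifold. A Haken $3$--manifold whose characteristic submanifold has no Seifert fibered piece has, for each bound on Euler characteristic, only finitely many isotopy classes of incompressible $\partial$--incompressible surfaces; so, as the $S_n$ are infinitely many of fixed complexity, $\Sigma(Y)$ has a Seifert fibered piece $\Phi$, with nonempty frontier in $Y$, whose intersections with the $S_n$ realise infinitely many isotopy classes, so that the $S_n$ differ by Dehn twists along vertical essential annuli of $\Phi$. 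Regluing $Y$ to $M$, a frontier torus of $\Phi$ --- or a torus assembled from frontier annuli of $\Phi$ with ends on $R^\pm$ --- yields, after standard adjustments removing compressions and boundary-parallelisms, an essential torus $T$ in $M$; hence $L$ is a satellite. Let $V$ be the solid torus bounded by $T$ with $L\subset V$, and $K$ the core of $V$.

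It remains to determine the winding number and the type of $K$, and this is where the real work lies. The twisting annulus can be isotoped off one of the $S_n$, which then lies inside $V\setminus\Int\nhd(L)$; the Seifert longitude of $L$ therefore bounds a surface there and so is nullhomologous in $H_1(V\setminus\Int\nhd(L))$, which forces the winding number of $L$ in $V$ to be $0$. (This step genuinely uses the connectedness hypothesis and the rigidity of genus-minimizing surfaces.) For the type: the reassembled $\Phi$ is an essential Seifert fibered piece of the JSJ of $M$ abutting $T$, and a Seifert fibered piece that carries an essential vertical annulus and can be glued along a torus to the exterior of a knot in $\Sphere$ must be a torus-knot exterior, a cable space, or a composing space, so $K$ is a torus knot, a cable knot, or a connected sum respectively. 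I expect the main obstacle to be precisely the bookkeeping that ties these strands together: deciding, among the possible locations of the vertical twisting annulus (ends on $R$, or ends on $\partial\nhd(L)$) and the frontier tori of the JSJ pieces of $M$, which configuration occurs; verifying in each case that the torus produced is genuinely essential rather than compressible or boundary-parallel; and matching the three types of Seifert fibered companion piece to the three companion types in the statement.
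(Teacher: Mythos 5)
Your route (cut along $R$, invoke characteristic submanifold/JSJ finiteness, reassemble a torus) is genuinely different from the paper's, which works in $\Sphere\setminus\Int\nhd(R)$ with Wilson's normal-surface machinery: there, every minimal genus Seifert surface disjoint from $R$ is a Haken sum $a_1R_1+\cdots+a_mR_m+b_1S_1+\cdots+b_nS_n$ of finitely many fixed normal pieces, so local infiniteness forces a closed summand $T$ to occur; $T$ cannot be a sphere (connectedness of $R'$) and an Euler characteristic count forces $\chi(T)=0$, so one gets an incompressible torus \emph{already disjoint from $R$}, and winding number $0$ falls out immediately from $R\subset V$. In your proposal the corresponding step is asserted rather than proved, and as stated it does not go through: a twisting annulus in $Y$ with ends on $R^\pm$ (or on the $\partial\nhd(L)$ annuli) does not in general close up under regluing to a torus at all (its two boundary curves need not be identified, and a union of frontier annuli can assemble to a higher-genus or non-closed surface), and when a torus is produced there is no way to promote a compressible one to an essential one by ``standard adjustments removing compressions'' --- compressing a torus in a link exterior yields a sphere. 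The mere existence of essential annuli or an essential characteristic piece in the cut-open manifold $Y$ is much weaker than the satellite conclusion, so this is a genuine gap, not bookkeeping.

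The second load-bearing step is also unsupported: to identify the companion you assume a Seifert fibered piece abuts $T$ from the companion side, but your $\Phi$ lives in $Y$ and could lie entirely on the satellite side of $T$ (for instance in a collar $T\times\intvl$ inside $V$, which is exactly what happens in the paper's example, where the twisting is along the torus itself); nothing in your argument places a Seifert fibered piece, or any essential annulus, in the companion exterior. The paper manufactures precisely this object from the Seifert surfaces themselves: since $R'=R^-+T$ is connected, $R^-$ must meet $T$; orienting the curves of $R^-\cap T$ and using $[R']=[R]$ shows they cancel algebraically, and a surgery-plus-Euler-characteristic argument shows every component of $R^-$ beyond $T$ is an annulus, which a reduced-form/minimal-weight argument shows is not boundary parallel; Proposition \ref{knotannulusprop} then gives torus knot, cable knot or connected sum. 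Two smaller points: your finiteness dichotomy (``no Seifert fibered piece in $\Sigma(Y)$ implies finitely many essential surfaces of bounded $\chi$'') needs rephrasing to account for $I$--bundle pieces and twists along their frontier annuli, and the claim that non-fibredness makes $Y$ non-Seifert-fibered is neither justified nor needed. In short, the skeleton is plausible, but the two steps you defer --- producing an essential torus disjoint from $R$ and producing an essential annulus (or Seifert structure) on the companion side --- are the actual content of the theorem and are exactly what the paper's normal-surface argument supplies.
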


\noindent This, in particular, includes all links with non-zero Alexander polynomial.

\bigskip
I wish to thank Marc Lackenby for helpful conversations, particularly with regard to the proof of Theorem \ref{satellitetheorem}.

\section{A knot with locally infinite $\ms$}\label{examplesection}

\begin{definition}[\cite{przytycki-2010} Section 3]
Let $M$ be a connected 3--manifold, and let $S,S'$ be (possibly disconnected) surfaces properly embedded in $M$ in general position.
$S$ and $S'$ \textit{bound a product region} if the following holds. There is a compact surface $T$, a finite collection $\rho\subseteq \partial T$ of arcs and simple closed curves and an embedding of $T^*=(T\times\intvl)/\!\sim$ into $M$ with the following properties. 
\begin{itemize}
 \item $T\times\{0\}=S\cap T^*$ and $T\times\{1\}=S'\cap T^*$.
 \item $\partial T^*\setminus (T\times\partial\intvl)\subseteq\partial M$.
\end{itemize}
Here $\sim$ collapses $x\times\intvl$ to a point for each $x\in\rho$.
Say $S$ and $S'$ have \textit{simplified intersection} if they do not bound a product region.
\end{definition}

\begin{proposition}[\cite{MR1315011} Proposition 4.8(2)]\label{productregionsprop}
Let $M$ be a $\partial$--irreducible Haken manifold.
Let $S,S'$ be incompressible, $\partial$--incompressible surfaces properly embedded in $M$ in general position. Suppose $S\cap S'\neq\emptyset$, but $S$ can be isotoped to be disjoint from $S'$. Then there is a product region between $S$ and $S'$.
\end{proposition}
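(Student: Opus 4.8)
The plan is to locate the product region as a ball or half-ball cut off by an innermost or outermost component of the intersection, after showing that such an \emph{inessential} component must occur. Set $\Gamma = S \cap S'$; in general position this is a $1$--manifold, properly embedded in each of $S$ and $S'$, made of simple closed curves and of arcs with endpoints on $\partial S \cap \partial S' \subseteq \partial M$, and it is nonempty by hypothesis. Note first that the conditions $S \cap T^* = T \times \{0\}$ and $S' \cap T^* = T \times \{1\}$ force the interior of any product region to miss $S \cup S'$; the two models I will build are (a) a ball, with $T = D^2$ and $\rho = \partial T$ collapsed onto a circle of $\Gamma$, and (b) a half-ball, with $T = D^2$, $\rho$ a single boundary arc collapsed onto an arc of $\Gamma$, and the complementary side lying on $\partial M$.

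For the constructive step, suppose some circle of $\Gamma$ bounds a disk in $S'$, and choose one innermost on $S'$, bounding $D' \subseteq S'$ with $\Int(D') \cap S = \emptyset$. As $S$ is incompressible and $\partial D' \subseteq S$ bounds a disk in $M$, it bounds a disk $D \subseteq S$; then $D \cup D'$ is an embedded sphere and bounds a ball by irreducibility. Taking this ball innermost---so that its interior meets neither $S$ nor $S'$, which one arranges by passing to an inner intersection circle whenever the interior is hit---produces a ball product region. If instead an arc of $\Gamma$ cuts a half-disk off $S'$, the same argument with $\partial$--incompressibility and $\partial$--irreducibility in place of incompressibility and irreducibility yields a half-ball product region, its free side on $\partial M$. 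In either case we are finished, so it remains only to produce an inessential component of $\Gamma$.

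This is where the hypothesis that $S$ can be isotoped off $S'$ enters, and I expect it to be the main obstacle. Suppose, for contradiction, that every component of $\Gamma$ is essential on both $S$ and $S'$. Cutting $M$ along $S'$ gives a manifold $N$ that is again irreducible and $\partial$--irreducible (since $S'$ is incompressible and $\partial$--incompressible), with two copies $S'_{\pm}$ of $S'$ in $\partial N$; and $\hat S = S \cap N$ is incompressible and $\partial$--incompressible in $N$, precisely because no component of $\Gamma$ is inessential, with $\partial \hat S$ meeting $S'_{\pm}$ in essential curves and arcs. The target contradiction is that an incompressible, $\partial$--incompressible surface with essential boundary on a boundary component cannot be isotoped off that component: when $S'$ is closed this is immediate, since essential simple closed curves on a boundary surface are not removable by ambient isotopy.

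The genuine difficulties, which I regard as the heart of the proof, are twofold. First, the isotopy of $S$ off $S'$ lives in $M$ and a priori crosses $S'$, so I would need to promote it to an isotopy statement inside the cut manifold $N$---equivalently, to show that a reduced (essential on both) intersection already realises the minimal geometric intersection number in its isotopy class, so that an intersection which can be removed cannot have been reduced. Second, when $\partial S' \neq \emptyset$ there is boundary bookkeeping: an essential arc on $S'_{\pm}$ might appear to slide across the corner $\partial S'$ onto the $\partial M$ portion of $\partial N$, and I would use $\partial$--incompressibility together with the $\partial$--irreducibility of $M$ to show that any such motion would exhibit the arc as $\partial$--parallel, forcing an inessential component of $\Gamma$ after all and closing the argument.
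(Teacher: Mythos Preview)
The paper does not prove this proposition; it is quoted without proof as Proposition~4.8(2) of the cited reference, so there is no argument in the paper to compare your proposal against.

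On its own merits, your proposal has a genuine gap. The constructive half---extracting a ball or half-ball product region from an innermost inessential circle or outermost inessential arc of $\Gamma$---is standard and essentially fine (with some care still owed to ensure the interior of the ball misses both surfaces, which your ``pass to an inner circle'' remark does not fully justify). The problem is the reduction step. You correctly identify that the heart of the matter is showing that if every component of $\Gamma$ is essential on both $S$ and $S'$, then $S$ cannot be isotoped off $S'$. But you do not prove this. Your suggested route---``a reduced (essential on both) intersection already realises the minimal geometric intersection number in its isotopy class''---is exactly the statement to be established, so invoking it is circular; and the remark that essential curves on a closed boundary component ``are not removable by ambient isotopy'' is likewise an assertion of the conclusion rather than an argument for it.

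This step is the actual substance of the result. In the literature it is handled by Waldhausen's isotopy theorem for incompressible, $\partial$--incompressible surfaces in Haken manifolds (via a hierarchy induction, tracking how the isotopy interacts with a system of cutting surfaces), not by the elementary innermost-disk considerations you outline. Without importing that machinery or reproducing an equivalent argument, the proposal does not close.
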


\begin{theorem}
Let $K_{\alpha}$ be the twisted Whitehead double of the trefoil shown in Figure \ref{locallyinfinitepic1}.
\begin{figure}[htbp]
\centering

\psset{xunit=.15pt,yunit=.15pt,runit=.15pt}
\begin{pspicture}(700,760)
{
\pscustom[linestyle=none,fillstyle=solid,fillcolor=lightgray]
{
\newpath
\moveto(170,410)
\lineto(200,440)
\lineto(200,680)
\lineto(320,680)
\lineto(350,710)
\lineto(320,740)
\lineto(140,740)
\lineto(140,440)
\lineto(170,410)
\closepath
}
}
{
\pscustom[linestyle=none,fillstyle=solid,fillcolor=lightgray]
{
\newpath
\moveto(50,290)
\lineto(80,320)
\lineto(80,560)
\lineto(320,560)
\lineto(350,590)
\lineto(320,620)
\lineto(20,620)
\lineto(20,320)
\lineto(50,290)
\closepath
}
}
{
\pscustom[linestyle=none,fillstyle=solid,fillcolor=lightgray]
{
\newpath
\moveto(50,290)
\lineto(20,260)
\lineto(20,200)
\lineto(560,200)
\lineto(560,260)
\lineto(530,290)
\lineto(500,260)
\lineto(80,260)
\lineto(50,290)
\closepath
}
}
{
\pscustom[linestyle=none,fillstyle=solid,fillcolor=lightgray]
{
\newpath
\moveto(170,410)
\lineto(140,380)
\lineto(140,20)
\lineto(680,20)
\lineto(680,140)
\lineto(650,170)
\lineto(620,140)
\lineto(200,140)
\lineto(200,380)
\lineto(170,410)
\closepath
}
}
{
\pscustom[linestyle=none,fillstyle=solid,fillcolor=lightgray]
{
\newpath
\moveto(650,170)
\lineto(680,200)
\lineto(680,620)
\lineto(380,620)
\lineto(350,590)
\lineto(380,560)
\lineto(620,560)
\lineto(620,200)
\lineto(650,170)
\closepath
}
}
{
\pscustom[linestyle=none,fillstyle=solid,fillcolor=lightgray]
{
\newpath
\moveto(530,290)
\lineto(560,320)
\lineto(560,740)
\lineto(380,740)
\lineto(350,710)
\lineto(380,680)
\lineto(500,680)
\lineto(500,320)
\lineto(530,290)
\closepath
}
}
{
\pscustom[linestyle=none,fillstyle=solid,fillcolor=gray]
{
\newpath
\moveto(320,140)
\lineto(350,110)
\lineto(380,140)
\lineto(320,140)
\closepath
}
}
{
\pscustom[linestyle=none,fillstyle=solid,fillcolor=gray]
{
\newpath
\moveto(350,110)
\lineto(320,80)
\lineto(350,50)
\lineto(380,80)
\lineto(350,110)
\closepath
}
}
{
\pscustom[linestyle=none,fillstyle=solid,fillcolor=gray]
{
\newpath
\moveto(350,50)
\lineto(320,20)
\lineto(380,20)
\lineto(350,50)
\closepath
}
}
{
\pscustom[linewidth=2,linecolor=black]
{
\newpath
\moveto(200,240)
\lineto(200,220)
\moveto(360,580)
\lineto(380,560)
\lineto(620,560)
\lineto(620,200)
\lineto(680,140)
\lineto(680,20)
\lineto(380,20)
\lineto(360,40)
\moveto(140,240)
\lineto(140,220)
\moveto(500,540)
\lineto(500,320)
\lineto(560,260)
\lineto(560,200)
\lineto(20,200)
\lineto(20,260)
\lineto(40,280)
\moveto(60,300)
\lineto(80,320)
\lineto(80,560)
\lineto(120,560)
\moveto(180,560)
\lineto(160,560)
\moveto(180,620)
\lineto(160,620)
\moveto(560,640)
\lineto(560,740)
\lineto(380,740)
\lineto(320,680)
\lineto(200,680)
\lineto(200,440)
\lineto(180,420)
\moveto(560,580)
\lineto(560,600)
\moveto(540,300)
\lineto(560,320)
\lineto(560,540)
\moveto(500,640)
\lineto(500,680)
\lineto(380,680)
\lineto(360,700)
\moveto(500,580)
\lineto(500,600)
\moveto(520,280)
\lineto(500,260)
\lineto(80,260)
\lineto(20,320)
\lineto(20,620)
\lineto(120,620)
\moveto(340,600)
\lineto(320,620)
\lineto(220,620)
\moveto(660,180)
\lineto(680,200)
\lineto(680,620)
\lineto(380,620)
\lineto(320,560)
\lineto(220,560)
\moveto(160,400)
\lineto(140,380)
\lineto(140,280)
\moveto(340,720)
\lineto(320,740)
\lineto(140,740)
\lineto(140,440)
\lineto(200,380)
\lineto(200,280)
\moveto(140,180)
\lineto(140,20)
\lineto(320,20)
\lineto(380,80)
\lineto(360,100)
\moveto(200,180)
\lineto(200,140)
\lineto(320,140)
\lineto(340,120)
\moveto(640,160)
\lineto(620,140)
\lineto(380,140)
\lineto(320,80)
\lineto(340,60)
}
}
\end{pspicture}
\caption{\label{locallyinfinitepic1}}
\end{figure}
Then $\ms(K_{\alpha})$ is not locally finite.
\end{theorem}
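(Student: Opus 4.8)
The plan is to produce a single vertex of $\ms(K_{\alpha})$ with infinitely many neighbours. I will fix one minimal genus Seifert surface $R$ for $K_{\alpha}$ and construct an infinite family $R_{1},R_{2},R_{3},\dots$ of minimal genus Seifert surfaces that are pairwise non-isotopic and each of which may be isotoped to be disjoint from $R$; since for a knot two disjoint, non-isotopic minimal genus Seifert surfaces are joined by an edge in the Kakimizu complex, this makes $R$ a vertex of infinite valence. For $R$ itself: the (twisted) Whitehead double of a nontrivial knot is nontrivial, so $K_{\alpha}$ does not bound a disc; hence the obvious spanning surface of Figure~\ref{locallyinfinitepic1} is of minimal genus, and I take it to be $R$.

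The family will come from the satellite structure of $K_{\alpha}$. The knot $K_{\alpha}$ lies, with winding number $0$, inside a solid torus $W$ whose core is the trefoil $T$; the companion torus $\hat T=\partial W$ is incompressible in $E(K_{\alpha})=\Sphere\setminus\Int\nhd(K_{\alpha})$ and splits it into the trefoil exterior $E(T)$ and the manifold $W^{\circ}=W\setminus\Int\nhd(K_{\alpha})$. The crucial feature is that $E(T)$ is Seifert fibred over the disc with two exceptional fibres: it carries an essential vertical annulus (the cabling annulus) whose boundary is a pair of regular fibres on $\hat T$, and hence, for each $k\ge1$, a disjoint union $A_{k}$ of $k$ parallel copies of it. On the other side of $\hat T$ one locates, inside $W^{\circ}$, an annulus and a pair of pants (the latter carrying $\partial R$) whose boundary curves on $\hat T$ have the same (regular fibre) slope. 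Gluing $k$ such pieces on the $W^{\circ}$ side to $A_{k}$ on the $E(T)$ side, chained so as to give a connected surface, yields a Seifert surface $R_{k}$ for $K_{\alpha}$ with $\chi(R_{k})=\chi(R)$; thus $R_{k}$ is again of minimal genus, and it meets $\hat T$ in $2k$ curves.

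It remains to check that the $R_{k}$ are pairwise non-isotopic and all disjoint from $R$. For the first point it suffices to show that the geometric intersection number of $R_{k}$ with $\hat T$ cannot be reduced below $2k$ by an isotopy of $R_{k}$; by Proposition~\ref{productregionsprop} this reduces to showing there is no product region between $R_{k}$ and $\hat T$, which in turn follows from the fact that the $E(T)$-pieces of $R_{k}$ are essential vertical annuli (not boundary parallel) and the $W^{\circ}$-pieces are chosen to be incompressible and $\partial$-incompressible towards $\hat T$. For the second point, the part of $R_{k}$ lying in $E(T)$ is automatically disjoint from $R\subseteq W$, and the part of $R_{k}$ in $W^{\circ}$ can be pushed off $R$ by a further product-region argument, using the standard freedom to take the boundaries of two Seifert surfaces to be disjoint parallel copies of the longitude. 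Hence every $R_{k}$ is adjacent to $R$ in $\ms(K_{\alpha})$, so $\ms(K_{\alpha})$ is not locally finite.

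The main difficulty lies in the construction together with the non-isotopy claim: one must choose the $W^{\circ}$-pieces (in particular a pair of pants of the correct boundary slope that is incompressible and $\partial$-incompressible in $W^{\circ}$) so that the surfaces $R_{k}$ are genuinely distinct — rather than all isotopic to $R$ once product regions are absorbed — and then certify this distinctness using the irreducibility and $\partial$-irreducibility of both sides of $\hat T$ together with the classification of incompressible, $\partial$-incompressible surfaces in the Seifert fibred piece $E(T)$. Once those pieces are in hand, verifying that each $R_{k}$ is disjoint from $R$ is comparatively routine.
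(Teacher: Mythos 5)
Your overall strategy --- fix the standard genus-$1$ surface $R$, use the companion torus $\hat T$ of the trefoil to manufacture infinitely many genus-$1$ Seifert surfaces disjoint from $R$, and certify distinctness via Proposition \ref{productregionsprop} --- is also the paper's strategy, but the mechanism you propose for generating the infinite family is different, and the write-up is incomplete precisely where all the content lies: you never construct the $W^{\circ}$--pieces for $k\ge2$, and you never prove the $R_k$ pairwise non-isotopic; you yourself flag this as ``the main difficulty''. That deferred step is a genuine gap, not a routine verification.

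Moreover, there is a concrete reason to expect it cannot be filled in the form you propose. For $k\ge2$ your surface needs, besides the pair of pants $P$, an embedded annulus $B\subset W^{\circ}$ disjoint from $P$, with both boundary curves of regular-fibre slope on $\hat T$, joining consecutive copies of the cabling annulus. But $W^{\circ}$ is the exterior of a (twisted) Whitehead link, which is hyperbolic, so every incompressible annulus in $W^{\circ}$ with boundary on $\hat T$ is boundary-parallel (and $B$ cannot be compressible, since its boundary curves are essential in $W^{\circ}$). If $\partial B$ interleaves with the two $\hat T$--boundary curves of $P$, then each candidate parallelism region contains a boundary curve of $P$ and hence would have to contain $P$ itself, which is impossible as $P$ also meets $\partial\nhd(K_{\alpha})$; so no such $B$ exists. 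If they do not interleave, $B$ is parallel to the sub-annulus of $\hat T$ avoiding $\partial P$, and pushing $B$ across $\hat T$ amalgamates two of your parallel cabling annuli into a $\partial$--parallel annulus, so $R_k$ collapses to a surface meeting $\hat T$ in fewer curves and the family yields no new vertices. (A smaller point: Proposition \ref{productregionsprop} as quoted only detects when two surfaces can be made \emph{disjoint}, whereas you need to rule out \emph{reducing} $|R_k\cap\hat T|$ below $2k$, which requires the stronger form of the product-region theorem.) The paper sidesteps all of this by keeping the intersection with the companion torus equal to $2$ for every member of the family: it forms a single surface $R_0$ by replacing one of the two plumbed annuli of $R$ with the cabling annulus (the boundary of a regular neighbourhood of the M\"obius band in the trefoil exterior), obtains the infinite family as $R_n=\Psi^n(R_0)$ for $\Psi$ a Dehn twist supported near the torus, and proves distinctness by putting $R_0$ and a parallel copy of $R_m$ in general position and checking that no complementary region is a product region. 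If you want to salvage your argument, replace the ``$k$ parallel copies'' idea with this twisting construction.
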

\begin{proof}
Let $R$ be the genus 1 Seifert surface for $K_{\alpha}$ shown in Figure \ref{locallyinfinitepic1} (note that every Whitehead double has such a Seifert surface). We construct an infinite family of genus 1 Seifert surfaces for $K_{\alpha}$ that are disjoint from $R$. 

Let $M=\Sphere\setminus \Int\nhd(K_{\alpha})$. Let $T$ be the torus that bounds the trefoil knot companion of $K_{\alpha}$, such that $K_{\alpha}$ lies in the solid torus bounded by $T$. In addition, let $M_1$ be the part of $M$ outside of $T$ as drawn in Figure \ref{locallyinfinitepic2} (that is, the side away from the knot), and $M_0$ the part on the inside. Let $\mu$ be a meridian of $T\subset\Sphere$. There is a M\"obius band properly embedded in $M_1$, the boundary of which is a longitude $\lambda$ of the solid torus bounded by $T$. Then $\lambda$ and $\mu$ are as shown in Figure \ref{locallyinfinitepic2}. 
\begin{figure}[htbp]
\centering

\psset{xunit=.15pt,yunit=.15pt,runit=.15pt}
\begin{pspicture}(700,700)
{
\pscustom[linewidth=3,linecolor=darkgray,linestyle=dashed,dash=3 6]
{
\newpath
\moveto(360,20.00000523)
\lineto(340,40.00000523)
\lineto(340,60.00000523)
\lineto(360,80.00000523)
}
}
{
\pscustom[linewidth=2,linecolor=black]
{
\newpath
\moveto(540,500.00000523)
\lineto(540,280.00000523)
\lineto(560,260.00000523)
\moveto(380,500.00000523)
\lineto(400,520.00000523)
\lineto(640,520.00000523)
\lineto(640,120.00000523)
\lineto(660,100.00000523)
\lineto(660,40.00000523)
\lineto(160,40.00000523)
\lineto(160,140.00000523)
\moveto(200,540.00000523)
\lineto(300,540.00000523)
\lineto(320,560.00000523)
\moveto(200,380.00000523)
\lineto(180,400.00000523)
\lineto(180,640.00000523)
\lineto(340,640.00000523)
\lineto(360,660.00000523)
\lineto(540,660.00000523)
\lineto(540,560.00000523)
\moveto(160,200.00000523)
\lineto(160,300.00000523)
\lineto(140,320.00000523)
\moveto(500,200.00000523)
\lineto(520,180.00000523)
\lineto(60,180.00000523)
\lineto(60,220.00000523)
\lineto(40,240.00000523)
\lineto(40,540.00000523)
\lineto(140,540.00000523)
\moveto(180,140.00000523)
}
}
{
\pscustom[linewidth=2,linecolor=black,linestyle=dashed,dash=2 4]
{
\newpath
\moveto(500,200.00000523)
\lineto(560,260.00000523)
\moveto(380,500.00000523)
\lineto(320,560.00000523)
\moveto(140,320.00000523)
\lineto(200,380.00000523)
}
}
{
\pscustom[linewidth=3,linecolor=darkgray]
{
\newpath
\moveto(360,80.00000523)
\lineto(380,60.00000523)
\lineto(380,40.00000523)
\lineto(360,20.00000523)
}
}
{
\pscustom[linewidth=4,linecolor=black]
{
\newpath
\moveto(200,140.00000262)
\lineto(200,80.00000262)
\lineto(620,80.00000262)
\lineto(620,500.00000262)
\lineto(200,500.00000262)
\moveto(500,560.00000262)
\lineto(500,620.00000262)
\lineto(200,620.00000262)
\lineto(200,200.00000262)
\moveto(140,500.00000262)
\lineto(80,500.00000262)
\lineto(80,200.00000262)
\lineto(500,200.00000262)
\lineto(500,500.00000262)
\moveto(560,500.00000262)
\lineto(560,140.00000262)
\lineto(20,140.00000262)
\lineto(20,560.00000262)
\lineto(140,560.00000262)
\moveto(140,200.00000262)
\lineto(140,680.00000262)
\lineto(560,680.00000262)
\lineto(560,560.00000262)
\moveto(140,140.00000262)
\lineto(140,20.00000262)
\lineto(680,20.00000262)
\lineto(680,560.00000262)
\lineto(200,560.00000262)
}
}
{
\put(220,360){$\lambda$}
\put(360,100){$\mu$}
}
\end{pspicture}
\caption{\label{locallyinfinitepic2}}
\end{figure}
Let $S_1$ be the annulus properly embedded in $M_1$ that is contained in the boundary of a regular neighbourhood of this M\"obius band in $M_1$. Then $\partial S_1$ is two copies of $\lambda$, with opposite orientations. Let $S_T$ be one of the two annuli into which $T$ is divided by $\partial S_1$.

$R$ is a plumbing of two annuli $S_0$ and $S'_0$ in $M_0$, where $S_0$ is parallel to $S_T$ in $\Sphere\setminus\Int(M_1)$. Isotope $R$ in $M$ so that $R\cap T=S_T$, keeping $\partial R$ fixed. Let $R_0$ be the Seifert surface for $K_{\alpha}$ given by removing $S_T$ from $R$ and replacing it with $S_1$. Then $|R_0\cap T|=2$. In addition, $R_0$ can be made disjoint from $R$.

Express a regular neighbourhood $\nhd(T)$ of $T$ as $\crcle\times\intvl\times\crcle$, where $\crcle\times\{\frac{1}{2}\}\times\{1\}=\mu$ and $\{1\}\times\{\frac{1}{2}\}\times\crcle=\lambda$, and let $S$ be the annulus $\crcle\times\intvl\times\{1\}$. 
Let $\psi\colon S\to S$ be a Dehn twist. Define $\Psi\colon \Sphere\setminus\nhd(K_{\alpha})\to \Sphere\setminus\nhd(K_{\alpha})$ by
\[
\Psi(x)=
\begin{cases}
(\psi(y),z) & \textit{ if }x=(y,z)\in S\times\crcle=\nhd(T)\\
x & \textit{ else.}
\end{cases}
\]
For $n\in\mathbb{Z}$ let $R_n=\Psi^n(R_0)$. Then, for each $n$, $R_n$ is a minimal genus Seifert surface for $K_{\alpha}$ that can be made disjoint from $R$. 
It remains to show that $R_n\neq R$ and $R_n\neq R_m$ for $m\neq n$ when viewed as vertices of $\ms(K_{\alpha})$. 

Fix $n\in\mathbb{Z}$. To show that $R_n\neq R$ we will show that $R_n$ cannot be made disjoint from $T$. In this case we may assume $n=0$. First note that $M$ is $\partial$--irreducible, $R_0$ and $T$ are incompressible, and $T$ is obviously $\partial$--incompressible. $R_0$ is also $\partial$--incompressible as it is orientable, incompressible and not $\partial$--parallel and $\partial M$ is a torus. $M\setminus \Int\nhd(R_0\cup T)$ has three components. One of these is $M_0\setminus\Int\nhd(R_0)$. This is not a product manifold between $R_0$ and $T$ since $R_0$ meets $K_{\alpha}$ in $M_0$ whereas $T$ does not. The other two components lie in $M_1$. One is homeomorphic as a sutured manifold to that shown in Figure \ref{locallyinfinitepic3}, and the other is homeomorphic to its complement. 
\begin{figure}[htbp]
\centering

\psset{xunit=.15pt,yunit=.15pt,runit=.15pt}
\begin{pspicture}(700,700)
{
\pscustom[linestyle=none,fillstyle=solid,fillcolor=lightgray]
{
\newpath
\moveto(60,680)
\lineto(640,680)
\curveto(662.16,680)(680,662.16)(680,640)
\lineto(680,60)
\curveto(680,37.84)(662.16,20)(640,20)
\lineto(60,20)
\curveto(37.84,20)(20,37.84)(20,60)
\lineto(20,640)
\curveto(20,662.16)(37.84,680)(60,680)
\closepath
}
}
{
\pscustom[linewidth=3,linecolor=black,linestyle=dashed,dash=2 4]
{
\newpath
\moveto(680,80.00000262)
\lineto(660,60.00000262)
\lineto(660,40.00000262)
\lineto(140,40.00000262)
\lineto(140,320.00000262)
\lineto(200,380.00000262)
\moveto(380,680.00000262)
\lineto(400,660.00000262)
\lineto(560,660.00000262)
\lineto(560,320.00000262)
\lineto(500,260.00000262)
\moveto(320,680.00000262)
\lineto(380,620.00000262)
\lineto(520,620.00000262)
\lineto(520,340.00000262)
\lineto(500,320.00000262)
\moveto(680,140.00000262)
\lineto(620,80.00000262)
\lineto(180,80.00000262)
\lineto(180,300.00000262)
\lineto(200,320.00000262)
\moveto(320,500.00000262)
\lineto(300,520.00000262)
\lineto(80,520.00000262)
\lineto(80,260.00000262)
\lineto(20,200.00000262)
\lineto(20,260.00000262)
\lineto(40,280.00000262)
\lineto(40,560.00000262)
\lineto(320,560.00000262)
\lineto(380,500.00000262)
}
}
{
\pscustom[linewidth=3,linecolor=black]
{
\newpath
\moveto(380,680.00000262)
\lineto(320,620.00000262)
\lineto(180,620.00000262)
\lineto(180,400.00000262)
\lineto(200,380.00000262)
\moveto(20,260.00000262)
\lineto(80,200.00000262)
\lineto(80,180.00000262)
\lineto(520,180.00000262)
\lineto(520,240.00000262)
\lineto(500,260.00000262)
\moveto(200,320.00000262)
\lineto(140,380.00000262)
\lineto(140,660.00000262)
\lineto(300,660.00000262)
\lineto(320,680.00000262)
\moveto(320,500.00000262)
\lineto(380,560.00000262)
\lineto(660,560.00000262)
\lineto(660,160.00000262)
\lineto(680,140.00000262)
\moveto(500,320.00000262)
\lineto(560,260.00000262)
\lineto(560,140.00000262)
\lineto(40,140.00000262)
\lineto(40,180.00000262)
\lineto(20,200.00000262)
\moveto(680,80.00000262)
\lineto(620,140.00000262)
\lineto(620,520.00000262)
\lineto(400,520.00000262)
\lineto(380,500.00000262)
}
}
{
\pscustom[linewidth=4,linecolor=black]
{
\newpath
\moveto(60,680.00000262)
\lineto(640,680.00000262)
\curveto(662.16,680.00000262)(680,662.16000262)(680,640.00000262)
\lineto(680,60.00000262)
\curveto(680,37.84000262)(662.16,20.00000262)(640,20.00000262)
\lineto(60,20.00000262)
\curveto(37.84,20.00000262)(20,37.84000262)(20,60.00000262)
\lineto(20,640.00000262)
\curveto(20,662.16000262)(37.84,680.00000262)(60,680.00000262)
\closepath
}
}
{
\pscustom[linewidth=4,linecolor=black,fillstyle=solid,fillcolor=white]
{
\newpath
\moveto(240,500)
\lineto(460,500)
\curveto(482.16,500)(500,482.16)(500,460)
\lineto(500,240)
\curveto(500,217.84)(482.16,200)(460,200)
\lineto(240,200)
\curveto(217.84,200)(200,217.84)(200,240)
\lineto(200,460)
\curveto(200,482.16)(217.84,500)(240,500)
\closepath
}
}
\end{pspicture}
\caption{\label{locallyinfinitepic3}}
\end{figure}
Neither of these is a product manifold. By Proposition \ref{productregionsprop}, $R_n$ cannot be isotoped to be disjoint from $T$.

Now fix $m\in\mathbb{Z}$. Again we may assume $n=0$. Let $R'_0$ be a copy of $R_0$, isotoped to be disjoint from $R_0$ (except along its boundary). Then $R'_m=\Psi^m(R'_0)$ is isotopic to $R_m$. Figure \ref{locallyinfinitepic4} shows a cross-section of $\nhd(T)$ in the case $m=2$, 
\begin{figure}[htbp]
\centering
\input{pictexfiles/locallyinfinitepic4}
\caption{\label{locallyinfinitepic4}}
\end{figure}
where $K_{\alpha}$ lies on the inside of $T$ as shown. The components of $M\setminus(R_0\cup R'_m)$ are of five types, as marked. Outside $\nhd(T)$, those marked $M_{0,b}$ and $M_{1,b}$ are each part of the parallel region between $R_0$ and $R'_0$. It is therefore clear that neither of $M_{0,b},M_{1,b}$ is a product region as they each have disconnected intersection with $R_0$. For the same reason, the components of the same type as $M_T$ are not product regions, and neither is $M_{0,a}$. The manifolds $M_{1,a}$ and $M'_{1,a}$ are sutured manifolds and are the same as the components of $M\setminus(R_0\cup T)$ in $M_1$. Hence, again by applying Proposition \ref{productregionsprop}, we see that {$R_0\neq R_m$}.

Thus $\ms(K_{\alpha})$ is locally infinite at $R$.
\end{proof}

\begin{remark}
In \cite{MR1123342}, Kakimizu constructs incompressible Seifert surfaces for a Whitehead double of a knot $K$ using two copies of a Seifert surface for $K$.
Although expressed differently, the above construction is very similar to that used by Kakimizu, with the two Seifert surfaces replaced by the annulus $S'$.
\end{remark}

\section{A restriction on links with locally infinite $\ms$}\label{conversesection}

In this section we prove Theorem \ref{satellitetheorem}. Our proof relies heavily on the work of Wilson in \cite{MR2420023}, to which we refer the reader for definitions not given here. We will also need the following proposition. 

\begin{proposition}[\cite{MR808776} 15.26]\label{knotannulusprop}
Let $K$ be a knot, and let $M=\Sphere\setminus\Int\nhd(K)$. Suppose there is an annulus $S$ properly embedded in $M$ that is not $\partial$--parallel. If neither component of $\partial S$ bounds a disc in $\partial M$ then $K$ is a torus knot, a cable knot, or a connected sum.
\end{proposition}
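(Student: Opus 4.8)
The plan is to read off the conclusion from the boundary slope of $S$ together with the torus obtained by capping $S$ off inside $\partial M$. Write $T=\partial M$, a torus, and fix a meridian $\mu$ and longitude $\lambda$ of $K$. Since neither component of $\partial S$ bounds a disc in $T$, both are essential and hence parallel on $T$, so $\partial S$ consists of two curves of a single slope $\sigma=a\lambda+b\mu$ with $\gcd(a,b)=1$; as $S$ is orientable these curves are anti-parallel, so $[\partial S]=0$ in $H_1(T)$, and because $H_2(M)=0$ the annulus $S$ separates $M$ into pieces $M_1,M_2$. First I would record the standard reductions: $M$ is irreducible and, as the hypotheses force $K$ to be nontrivial (a solid torus contains no essential annulus with essential boundary that is not $\partial$--parallel), $T$ is incompressible; and I may assume $S$ is incompressible and $\partial$--incompressible, since a compression or $\partial$--compression would either contradict incompressibility of $T$, reduce $S$ to the meridional case below, or make $S$ $\partial$--parallel.

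If $\sigma=\mu$ (the case $a=0$), each component of $\partial S$ bounds a meridian disc of $\nhd(K)$ meeting $K$ once, and capping $S$ with these two discs produces a $2$--sphere $\Sigma\subseteq\Sphere$ meeting $K$ transversely in two points. This realises $K$ as a connected sum along $\Sigma$, and the non-triviality of the sum is exactly where $S$ being non-$\partial$--parallel is used: if one of the two ball--arc pairs cut off by $\Sigma$ were trivial, $\Sigma$ could be isotoped to exhibit $S$ as boundary-parallel. Hence in this case $K$ is a connected sum.

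If $\sigma\neq\mu$, the plan is to build a companion torus and invoke the torus theorem for knots. Let $A$ be one of the two annuli into which $\partial S$ cuts $T$, set $A'=\overline{T\setminus A}$, and form the torus $\hat T=S\cup A$ embedded in $\Sphere$, bounding $M_1$ on one side and $W:=M_2\cup_{A'}\nhd(K)$ (which contains $K$) on the other. I would first show $\hat T$ is incompressible in $M$: an innermost/outermost argument, using that $S$ is incompressible and $\partial$--incompressible, forces any compressing curve on $\hat T$ to be parallel to the core of $A$, i.e. to the slope $\sigma$; but for a nontrivial knot no essential curve of $T$ bounds a disc in $M$, so no such compression exists. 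Ruling out $\hat T$ being $\partial$--parallel (which would again make $S$ $\partial$--parallel), $\hat T$ is an essential torus, so $K$ is a satellite. By Alexander's theorem $\hat T$ bounds a solid torus in $\Sphere$; since $\hat T$ is incompressible in $M$ the side $M_1$ is not a solid torus, so $W$ is the companion solid torus, with core a knot $K'$ and $K\subseteq W$. Finally, because $A\subseteq\partial\nhd(K)$, the core $K$ can be isotoped onto $\hat T$, lying on $\partial\nhd(K')=\hat T$ as an essential curve; this exhibits $K$ as a cable of $K'$ when $K'$ is knotted, and as a curve on an unknotted (Heegaard) torus, hence a torus knot, when $K'$ is unknotted. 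In either case $K$ is a torus knot or a cable knot.

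The step I expect to be the main obstacle is the last one in the non-meridional case: verifying that $K$ genuinely lies on $\hat T$ as an essential curve, rather than merely sitting inside the companion solid torus as an abstract satellite. Concretely this amounts to controlling the boundary slope $\sigma$ --- showing it is a longitudinal framing of $K$ (so that the core of $\nhd(K)$ is isotopic onto the annulus $A\subseteq\hat T$) and computing the resulting winding number --- and to pinning down which side of $\hat T$ is the solid torus. Handling this cleanly requires careful use of the incompressibility and non-$\partial$--parallelism of $S$ to exclude the degenerate slopes that would collapse the cable structure. If the direct slope bookkeeping became unwieldy I would instead isotope $S$ into the characteristic Seifert-fibred submanifold of $M$ and classify the fibred pieces of a knot exterior that carry an essential annulus with essential boundary, namely those making $M$ itself Seifert-fibred (torus knots) or containing a cable space incident to $\partial M$ (cable knots), with the meridional case again giving the connected sum.
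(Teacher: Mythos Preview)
The paper does not supply a proof of this proposition; it is quoted from Burde--Zieschang (the reference \cite{MR808776}, Proposition~15.26) and used as a black box in the proof of Theorem~\ref{satellitetheorem}. So there is no argument in the paper to compare yours against.

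For what it is worth, your outline follows the classical route taken in that reference: split on whether the boundary slope of $S$ is meridional (cap off to a decomposing $2$--sphere, giving a nontrivial connected sum) or not (build a torus $\hat T$ from $S$ and half of $\partial M$ and read off a companion solid torus). Your identification of the delicate step is accurate, but the difficulty is slightly different from what you describe. The assertion ``because $A\subseteq\partial\nhd(K)$, the core $K$ can be isotoped onto $\hat T$'' is not immediate: the core of $A$ has slope $\sigma=a\lambda+b\mu$ on $\partial\nhd(K)$, and $K$ is isotopic inside $\nhd(K)$ to that core only when $|a|=1$, which fails for the cabling annulus of a general $(p,q)$--cable or torus knot. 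The standard fix is to analyse \emph{both} tori $S\cup A$ and $S\cup A'$, apply Alexander's theorem to each, and determine which of the two pieces $M_A,M_{A'}$ of $M$ cut along $S$ are solid tori: if both are, $M$ is Seifert fibred over a disc and $K$ is a torus knot; if exactly one is, its union with $\nhd(K)$ is forced (again by Alexander, since the remaining piece is not a solid torus) to be a solid torus, and the gluing pattern along the essential annulus exhibits $K$ as a cable of its core. Your proposed fallback through the characteristic Seifert-fibred piece of $M$ is this case analysis in disguise and is the cleanest way to finish.
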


\begin{definition}
A compact surface $S$ embedded in $\Sphere$ with no closed components is a \textit{spanning surface} for an unoriented link $L$ if $\partial S=L$. We will call $S$ an \textit{unoriented Seifert surface} for $L$ if $S$ is orientable.
\end{definition}

\begin{remark}
An unoriented Seifert surface $R$ for an unoriented link $L$, together with a fixed orientation on $R$, is a Seifert surface for $L$ with the orientation induced by $R$.
\end{remark}

\begin{definition}
Let $S$ be a normal surface in a triangulated 3--manifold.
Its \textit{weight} is the number of times it meets the 1--skeleton of the triangulation. Call $S$ \textit{minimal} if it has minimal weight among normal surfaces isotopic to $S$ by an isotopy fixing $\partial S$.
\end{definition}

\begin{definition}
Let `$+$' denote the usual addition on normal surfaces.
Given normal surfaces $S,S_1,S_2$ with $S=S_1+S_2$, say that $S_1$ and $S_2$ are in \textit{reduced form} if they have been isotoped to minimise $|S_1\cap S_2|$ while maintaining the equation $S=S_1+S_2$.
\end{definition}

In \cite{MR2420023}, Wilson states the following.

\begin{theorem}[\cite{MR2420023} Main Theorem 1.1]\label{wilsonmainthm}
Let $K$ be a non-trivial knot, and let $M=\Sphere\setminus\nhd(K)$. Then there is a finite set $\{R_1,\cdots,R_m\}$ of incompressible Seifert surfaces for $K$ and a finite set $\{S_1,\cdots,S_n\}$ of closed surfaces in $M$ that are not boundary parallel such that any incompressible Seifert surface $R$ is isotopic to a Haken sum $R=R_i+a_1 S_1+\cdots+a_n S_n$, where $a_1,\ldots,a_n$ are non-negative integers.
\end{theorem}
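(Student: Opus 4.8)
The plan is to run the whole argument inside Haken's theory of normal surfaces. I would first fix a triangulation $\tau$ of the compact manifold $M=\Sphere\setminus\Int\nhd(K)$ in which $\partial M$ is a subcomplex. Since $M$ is irreducible and $\partial$--irreducible, an incompressible, $\partial$--incompressible surface is isotopic to a least-weight normal surface, the normalising moves being ambient isotopies except where a compression or $\partial$--compression is available; an incompressible Seifert surface admits neither, being orientable, incompressible, not $\partial$--parallel, and bounded by the single longitude $\lambda$ on the torus $\partial M$ (the same reasoning used for $R_0$ in Section \ref{examplesection}). It therefore suffices to build the decomposition for least-weight normal representatives.

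Next I would extract the combinatorial skeleton of the statement. The normal surfaces carried by $\tau$ are the non-negative integer points of a rational cone defined by the matching equations; adding the requirement that the boundary normal coordinates on $\partial M$ equal the fixed vector $v_\lambda$ of one longitude gives an affine system $Ax=b$. Its recession cone $\{x\ge 0 : Ax=0\}$ is precisely the set of \emph{closed} normal surfaces, since boundary coordinates are non-negative and so cannot cancel. By Gordan's lemma the solutions of $Ax=b$ form finitely many translates $p+\langle g_1,\dots,g_n\rangle$ of the monoid generated by a Hilbert basis $g_1,\dots,g_n$ of this recession cone. Reading these identities geometrically as Haken sums, every normal Seifert surface is $R=R^{\flat}+a_1S_1+\cdots+a_nS_n$, where $R^{\flat}$ ranges over a finite set of normal surfaces carrying all of $\lambda$, each $S_j$ is a closed normal surface, and $a_j\ge 0$; spheres and $\partial$--parallel closed summands can be discarded because $M$ is irreducible.

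The main obstacle is to replace this purely combinatorial decomposition by one into genuinely \emph{incompressible} pieces, since neither the base surfaces $R^{\flat}$ nor the closed generators $S_j$ need be incompressible a priori, and Haken summation need not preserve incompressibility. The crucial claim I would prove is that if $R$ is a least-weight incompressible Seifert surface and $R=R'+S$ is a normal sum with $S$ a non-trivial closed surface, then after taming the double curves $R'\cap S$ by regular exchanges in the style of Jaco--Oertel, both $S$ and $R'$ may be taken incompressible, $R'$ is again an incompressible Seifert surface, and $R'$ has strictly smaller weight than $R$. Establishing this is the heart of the argument: one must rule out double curves that bound discs or form trivial patches, since these would yield a compression of the incompressible surface $R$, and one must ensure that no compressible or $\partial$--parallel closed piece is ever split off.

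Granting this claim, the proof finishes by induction on weight. Peeling off a single incompressible closed summand strictly lowers the weight, so the process stops at an incompressible Seifert surface carrying no closed summand; such a terminal surface has weight no larger than that of the relevant base $R^{\flat}$, and only finitely many normal surfaces have bounded weight. The terminal surfaces that arise form the finite list $R_1,\dots,R_m$, the essential closed surfaces encountered form $S_1,\dots,S_n$, and restoring the peeled summands exhibits the original surface as $R=R_i+a_1S_1+\cdots+a_nS_n$ with every $a_j\ge 0$, as claimed.
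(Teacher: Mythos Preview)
This theorem is not proved in the paper; it is quoted from Wilson \cite{MR2420023} and then discussed. What the paper offers is commentary on Wilson's argument: the proof normalises the given Seifert surface via Lemma~3.3, obtaining a \emph{minimal} normal representative, and then appeals to \cite{MR744850} Theorem~2.2, for which minimality is the essential hypothesis. Your outline follows precisely this route---triangulate, normalise to least weight, use the polyhedral structure of the normal solution set, and invoke Jaco--Oertel to control incompressibility of summands---so at the level of strategy you are reconstructing Wilson's approach rather than proposing an alternative.

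Two points deserve comment. First, the paper explicitly identifies a gap in Wilson's proof that your sketch inherits: when you peel off a closed summand and assert that ``$R'$ is again an incompressible Seifert surface'', you have not explained why $R'$ is orientable. Haken summation does not transmit orientability backwards in any obvious way, and the paper notes that the $R_i$ arising from Wilson's argument are a priori only spanning surfaces; making them orientable requires additional work that neither Wilson nor you supply. Second, your ``crucial claim''---that in a least-weight decomposition $R=R'+S$ both pieces may, after regular exchanges, be taken incompressible with $R'$ of strictly smaller weight---is exactly the content of the Jaco--Oertel result the paper cites, and you state it rather than prove it. That is acceptable for an outline, but it is the entire substance of the argument; the surrounding material (the affine structure of solutions, additivity of weight, the descent) is routine. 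In sum, your proposal is a faithful sketch of the cited proof, sharing its known lacuna, rather than an independent or complete argument.
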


The surfaces $R_1,\ldots,R_m$ that arise from Wilson's proof are spanning surfaces for $K$. However, he does not consider the orientability of these surfaces, which is necessary to conclude, as he does, that they are in fact Seifert surfaces. With some further work it can be shown that it is possible to require these surfaces to be orientable. We will not need this.

It is also worth noting the nature of the isotopy referred to in Theorem \ref{wilsonmainthm}. In his proof, Wilson isotopes the chosen Seifert surface $R$ into normal form based on the following lemma.

\begin{lemma}[\cite{MR2420023} Lemma 3.3]
Let $K$ be a knot, let $M=\Sphere\setminus\nhd(K)$ and let $R$ be an incompressible Seifert surface for $K$ in $M$. Suppose that $M$ is triangulated, and $\partial R$ meets each 2--simplex of the triangulation in at most one normal arc. Then $R$ can be put into normal form by an isotopy fixing $\partial R$.
\end{lemma}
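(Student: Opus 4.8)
The plan is to run the standard Haken normalisation procedure on $R$, but to arrange every reducing isotopy so that it fixes $\partial R$; the hypothesis on $\partial R$ is precisely what makes this possible. First I would isotope $R$ into general position with respect to the triangulation, so that $R$ misses the $0$--skeleton, meets the $1$--skeleton transversely in finitely many points, and meets each $2$--simplex in a finite collection of arcs and simple closed curves. By hypothesis $\partial R$ already meets each $2$--simplex of $\partial M$ in at most one normal arc, so $\partial R$ is already normal on $\partial M$, and this initial general-position isotopy, together with everything that follows, can be taken to fix $\partial R$. To measure progress I would use the lexicographically ordered complexity $(w,k)$, where $w=|R\cap M^{(1)}|$ is the weight and $k$ is the total number of components of $R\cap M^{(2)}$.

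The first reduction removes simple closed curves of $R\cap M^{(2)}$. Choosing an innermost such curve $c$ on a $2$--simplex, $c$ bounds a disc $D$ in that simplex whose interior misses $R$. Since $R$ is incompressible and $c\subseteq R$, the curve $c$ also bounds a disc $D'$ in $R$; as $M$ is irreducible (being a knot complement), the sphere $D\cup D'$ bounds a ball, and I would isotope $R$ across this ball to remove $c$, strictly decreasing $k$ without increasing $w$. Because these curves lie in the interiors of the $2$--simplices and the isotopy is supported near the bounding ball, it fixes $\partial R$. After finitely many such moves $R\cap M^{(2)}$ consists only of arcs.

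The second reduction removes non-normal arcs, that is, arcs of $R\cap M^{(2)}$ with both endpoints on a single edge. Taking an outermost such arc $\alpha$ on a $2$--simplex, $\alpha$ cuts off a disc $D$ in that simplex with $\Int(D)\cap R=\emptyset$ and $\partial D=\alpha\cup\beta$, where $\beta$ lies in one edge $e$. Pushing $R$ across $D$ and over $e$ is an isotopy that removes the two points $\partial\alpha$ from $R\cap e$, strictly decreasing $w$ (and requiring no use of incompressibility, merely that $\alpha$ is outermost). Here the hypothesis does the essential work: since $\partial R$ meets each boundary $2$--simplex in at most one normal arc, no non-normal arc of $R\cap M^{(2)}$ is pinned against $\partial M$, so each such move can be performed in the interior and fixing $\partial R$. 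When neither reduction applies, $R\cap M^{(2)}$ consists solely of normal arcs and no closed curves.

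Finally I would identify the pieces $R\cap\Delta^{3}$. Each component meets $\partial\Delta^{3}\cong\sphere$ in a connected normal curve, and such curves are, by a short case check, exactly the triangles and quadrilaterals. An innermost/outermost argument shows $R\cap\Delta^{3}$ is incompressible and $\partial$--incompressible in the ball $\Delta^{3}$, hence a disc; as discs in a ball with a fixed boundary curve are isotopic, the component is the standard normal triangle or quadrilateral. Thus $R$ is normal, and every isotopy used has fixed $\partial R$. The main obstacle is this last constraint throughout: in the usual argument one is free to slide $\partial R$ on $\partial M$, whereas here one must verify that no reducing move is ever forced to cross or disturb $\partial R$ — which is exactly what the at-most-one-normal-arc hypothesis guarantees.
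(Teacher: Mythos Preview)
The paper does not give its own proof of this lemma---it is quoted from Wilson---and its only added remark is that Wilson's argument actually produces \emph{minimal} normal form, which is what is needed downstream.

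Your approach is the standard Haken normalisation, and the first three reductions are correctly set up; in particular your use of the hypothesis (which forces $|R\cap e|\le 1$ for every edge $e\subset\partial M$, so that no bigon move ever involves a boundary edge) is exactly the point, and you are right that connected normal curves on $\partial\Delta^{3}$ are only triangles and quadrilaterals.

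The gap is in the last paragraph. After removing closed curves and non-normal arcs from the $2$--simplices, nothing yet forces the components of $R\cap\Delta^{3}$ to be discs: a component could be an annulus, a once-punctured torus, and so on, and in particular need not have connected boundary. Your assertion that an innermost/outermost argument shows $R\cap\Delta^{3}$ is incompressible in $\Delta^{3}$ does not work as stated: a compressing disc $D\subset\Delta^{3}$ for $R\cap\Delta^{3}$ has $\partial D$ essential in $R\cap\Delta^{3}$ but typically \emph{inessential} in $R$---it bounds a disc $D'\subset R$ that leaves $\Delta^{3}$---so there is no contradiction with the incompressibility of $R$ in $M$. What you do get is a further weight-reducing move: $D\cup D'$ bounds a ball by irreducibility, and isotoping across it replaces $D'$ by a disc interior to $\Delta^{3}$; since $D'$ exits $\Delta^{3}$ it contains at least one normal arc and hence meets $M^{(1)}$, so $w$ strictly drops. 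With this added as a fourth reduction your descent terminates with each $R\cap\Delta^{3}$ a union of discs, and then your remaining remarks finish the job. Organised as ``minimal weight implies normal'' rather than ``reduce until normal'', the same reductions also yield the minimal normal form the paper says is actually required.
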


The proof of this lemma gives the stronger conclusion that the isotopy puts the surface into minimal normal form. This is important because minimality is a key hypothesis of \cite{MR744850} Theorem 2.2, which is used in the proof of Theorem \ref{wilsonmainthm}.

Aside from these points, Wilson's proof is actually stronger than the statement of Theorem \ref{wilsonmainthm} suggests. In particular, by following the proof with $M$ the complement of a minimal genus Seifert surface for a link, it gives the following.

\begin{theorem}\label{wilsontheorem}
Let $L$ be an oriented link such that every minimal genus Seifert surface for $L$ is connected. Let $R$ be a minimal genus Seifert surface for $L$, let $M=\Sphere\setminus\Int\nhd(R)$, and fix a set $\rho_1,\cdots,\rho_k$ of core curves of the annuli $\partial M\cap\partial\!\nhd(L)$, one for each link component. There is a triangulation of $M$ such that every Seifert surface $R'$ for $L$ disjoint from $R$ can be put into normal form with $\partial R'=\bigcup_{i=1}^k{\rho_i}$. 

Furthermore, there is a finite set $\{R_1,\cdots,R_m\}$ of surfaces in $M$ with non-empty boundary contained in $\bigcup_{i=1}^k{\rho_i}$, and a finite set $\{S_1,\cdots,S_n\}$ of closed surfaces in $M$, such that all these surfaces are incompressible and in normal form, and the following holds. Any minimal genus Seifert surface $R'$ for $L$ in $M$ with $\partial R'=\bigcup_{i=1}^k{\rho_i}$ and in minimal normal form can be expressed as $a_1 R_1+\cdots+a_m R_m +b_1 S_1+\cdots+b_n S_n$ for some $a_i,b_i\in\mathbb{Z}_{\geq 0}$.
\end{theorem}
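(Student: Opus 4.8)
## Proof Proposal for Theorem \ref{wilsontheorem}

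The plan is to reread Wilson's proof of Theorem \ref{wilsonmainthm} line by line, checking at each step that the argument only uses hypotheses that remain available in our more general setting, and recording where the extra hypothesis (every minimal genus Seifert surface for $L$ is connected) is needed. The key observation is that if $R$ is a minimal genus Seifert surface for $L$ and $R'$ is any Seifert surface for $L$ that can be realised disjointly from $R$, then $R'$ lives in the compact manifold $M=\Sphere\setminus\Int\nhd(R)$; after the standard cut-and-paste normalisation near $\partial\nhd(L)$ we may take $\partial R'$ to be the fixed curve system $\bigcup_{i=1}^k\rho_i$ on $\partial M$. So the first step is to set up this ambient manifold $M$ and a triangulation of it for which $\bigcup_i\rho_i$ meets each $2$--simplex in at most one normal arc (a barycentric-type subdivision achieves this), so that \cite{MR2420023} Lemma 3.3 applies and gives each such $R'$ a \emph{minimal} normal form with boundary exactly $\bigcup_i\rho_i$.

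Next I would carry over the structural part of Wilson's argument. Wilson's proof proceeds by: (i) noting that the set of normal surfaces in a fixed triangulation carrying a given boundary slope and homology class forms a finitely-generated monoid under Haken sum (Haken's finiteness, via the solution space being a rational polyhedral cone); (ii) decomposing a minimal normal representative $R'$ into fundamental pieces, and using an innermost/outermost and Euler-characteristic bookkeeping argument — this is where \cite{MR744850} Theorem 2.2 on minimal surfaces and Haken sums enters — to control which summands can appear; (iii) concluding that the summands that are not closed can be taken from a finite list $\{R_1,\ldots,R_m\}$ of incompressible spanning surfaces, and the closed summands from a finite list $\{S_1,\ldots,S_n\}$. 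In our setting $M$ is still irreducible and $\partial$--irreducible (it is a Seifert-surface complement of a non-split link), so incompressibility/$\partial$-incompressibility arguments and the normalisation machinery go through verbatim; the arithmetic of Haken sums and the Euler characteristic additivity $\chi(A+B)=\chi(A)+\chi(B)$ are insensitive to whether we started from a knot or a link. The role of the connectedness hypothesis is exactly to guarantee that "minimal genus Seifert surface for $L$" is the correct analogue of "incompressible Seifert surface for $K$" and that $\ms(L)$ has the curve-complex-style description; it is also what lets us identify the maximal-Euler-characteristic surfaces in the relevant homology class with the vertices we care about, so that the finiteness conclusion is the one we want.

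The main obstacle I expect is bookkeeping rather than a genuinely new idea: one must check that none of Wilson's arguments secretly used that $\partial M$ is a single torus, that $K$ is a knot rather than a link, or that the Seifert surface is connected \emph{a priori} — and, conversely, pin down precisely the one place (identifying the generators/vertices) where connectedness of \emph{minimal genus} surfaces is genuinely invoked. A secondary technical point, flagged in the paragraph preceding the theorem, is to make sure the isotopy provided by Lemma 3.3 lands in \emph{minimal} normal form, since minimality is a hypothesis of \cite{MR744850} Theorem 2.2; this needs the triangulation to be chosen so that $\partial R'$ is already normal with at most one arc per face, which is why the statement asserts the existence of such a triangulation rather than taking one as given. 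Once these checks are complete, the statement follows by simply transcribing the conclusion of Wilson's proof — the finite generating set $\{R_i\}\cup\{S_j\}$ and the non-negative-integer Haken-sum expression $R'=a_1R_1+\cdots+a_mR_m+b_1S_1+\cdots+b_nS_n$ — into the language of $M=\Sphere\setminus\Int\nhd(R)$ with the fixed boundary curves $\rho_i$. I would write the proof as a guided tour of \cite{MR2420023}, quoting each lemma of Wilson's that is reused and indicating the (minor) modifications, rather than reproducing the whole argument.
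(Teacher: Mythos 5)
Your overall strategy is the one the paper itself uses: the paper's ``proof'' of this theorem is precisely the observation that Wilson's argument, run with $M$ taken to be the complement of a minimal genus Seifert surface $R$ (rather than the knot complement), yields the stated conclusion, with the same two technical caveats you identify (fixing the boundary curves so that Lemma 3.3 of \cite{MR2420023} applies, and noting that its proof gives \emph{minimal} normal form, as required for Theorem 2.2 of \cite{MR744850}). One concrete correction, though: your parenthetical claim that $M=\Sphere\setminus\Int\nhd(R)$ is $\partial$--irreducible is false in general. The complement of a Seifert surface typically has compressible boundary --- for a fibred link with $R$ the fibre, $M$ is a handlebody --- so you cannot justify the transfer of the normalisation and incompressibility steps by appealing to $\partial$--irreducibility of $M$. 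What is actually available, and all the paper uses, is irreducibility of $M$ (which follows from connectedness of $R$) together with incompressibility of the surfaces $R'$ \emph{in} $M$ (a compressing disc in $M$ would compress $R'$ in the link exterior); the line-by-line check you propose should be carried out against these weaker hypotheses, verifying in particular that the normalisation of $R'$ rel its fixed boundary $\bigcup_i\rho_i$ does not secretly need incompressibility of $\partial M$. Relatedly, the role of the hypothesis that all minimal genus Seifert surfaces are connected is more prosaic than you suggest: it guarantees that $R$ and the surfaces $R'$ being decomposed are connected (hence $M$ is connected and irreducible), rather than anything about the curve-complex description of $\ms(L)$.
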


If $L$ has more than one component, it is possible that, for a given $j\leq m$, $\partial R_j$ is a strict subset of $\bigcup_{i=1}^k{\rho_i}$. However, only finitely many combinations of $R_1,\cdots,R_m$ will yield the correct boundary. Hence we may assume that $\partial R_j=\bigcup_{i=1}^k{\rho_i}$. Then $\sum_{i=1}^{m}{a_i}=1$.

If $K$ is an oriented knot, any unoriented Seifert surface for $K$ can be oriented to make it a Seifert surface. For a link $L$ with more than one component this might not be the case in general.
The presence of the Seifert surface $R$ for the oriented link $L$ that is disjoint from the spanning surfaces $R_i$ allows us to say more in this case. Suppose that, for some $j$, $R_j$ cannot be oriented to make it a Seifert surface for $L$. Combining it with $R$ then gives a closed, non-orientable surface in $\Sphere$, which is not possible. Hence each $R_i$ is a Seifert surface for $L$.

\satellitethm*
\begin{proof}
Let $R$ be a minimal genus Seifert surface for $L$ such that $\ms(L)$ is locally infinite at $R$. That is, there are infinitely many minimal genus Seifert surfaces for $L$ that can be made disjoint from $R$. Let $M={\Sphere\setminus\Int\nhd(R)}$, and fix a set $\rho_1,\cdots,\rho_k$ of core curves of the annuli $\partial M\cap\partial\!\nhd(L)$, one for each link component. 
Then Theorem \ref{wilsontheorem} applies. In addition, it is clear that none of the $R_i$ is a disc and that, since $R$ is connected, $M$ is irreducible.

By discarding surfaces if necessary, we may ensure that, for any $j\leq n$, the sets $\{R_1,\cdots,R_m\}$ and $\{S_1,\cdots,S_n\}\setminus\{S_j\}$ do not satisfy the conclusions of Theorem \ref{wilsontheorem}. We may also assume that $S_1$ has minimal genus among the $S_i$.
Let $R'$ be a minimal genus Seifert surface in minimal normal form such that $R'=R_1+b_1 S_1 + \cdots + b_n S_n$ with $b_1>0$, and set $T=S_1$. 
Let $R^-=R_1+(b_1-1) S_1 +b_2 S_2+ \cdots + b_n S_n$, so that $R'=R^- + T$, and isotope $R^-$ and $T$ into reduced form. Since the isotopy keeps $\partial R'$ fixed and $T$ is closed, this will leave $\partial R^-$ unchanged.
Then, by \cite{MR744850} Lemma 2.1, no curve of $R^-\cap T$ bounds a disc in either $R^-$ or $T$. Note that although \cite{MR744850} Lemma 2.1 is proved only for closed surfaces, the same proof works in this case because $T$ is closed. 

Suppose that $T$ is a 2--sphere. Then, after the isotopy, it must be disjoint from $R^-$. This contradicts that $R'$ is connected. 
Since there are infinitely many minimal genus Seifert surfaces in minimal normal form in $M$, it follows that $T$ is a torus. 

Let $M_0$ be the component of $M\setminus\Int\nhd(T)$ containing $\partial M$, and $M_1$ the other component.
The orientation that $R'$ inherits from $L$ induces an orientation on each component of $R'\cap M_0$ and hence on each curve of $R^-\cap T$. Let $\rho$ be a curve on $T$ that meets each curve of $R^-\cap T$ once. Because $T$ is disjoint from $R$, the algebraic intersection $\rho.R$ of $\rho$ and $R$ is 0. As $[R']=[R]$ in $\Sphere\setminus\Int\nhd(L)$, this gives that $\rho.R'=0$, and so $\rho.(R^-\cap T)=0$ on $T$. Therefore half the curves of $R^-\cap T$ are oriented in one direction, and half are oriented in the other direction. In particular, $|R^-\cap T|$ is even. Find adjacent curves with opposite orientations, and surger $R^-$ along the subannulus of $T$ between them. Repeating this to remove all curves of $R^-\cap T$ gives a new Seifert surface $R''$ for $L$, together with a closed, possibly disconnected, surface $S''$. Note that $R''\subset M_0$ and $S''$ is orientable.
As $R'$ is minimal genus, $\chi(R')\geq\chi(R'')=\chi(R^-)-\chi(S'')=\chi(R')-\chi(T)-\chi(S'')$, so $\chi(S'')\geq 0$. 
The components of $(R^-\cup T)\setminus(R^-\cap T)$ from which $S''$ is constructed each have boundary, and none of them is a disc. Therefore each of these components is an annulus, and in particular this includes every component of $R^-\cap M_1$.

Let $S$ be one such annulus in $M_1$, and suppose it is parallel to a subannulus $S_T$ of $T$. If there are other curves of $R^-\cap T$ in $S_T$, they must also bound annuli parallel to $T$. Hence we may assume $R^-\cap\Int(S_T)=\emptyset$. At each of the two boundary curves of $S_T$, the cut-and-paste operation that creates $R'$ from $R^-$ and $T$ might go one of two ways (see Figure \ref{satellitepic1}). 
\begin{figure}[htbp]
\centering
\input{pictexfiles/satellitepic1}
\caption{\label{satellitepic1}}
\end{figure}
If both join together $S$ and $S_T$ then this creates a torus component of $R'$, contradicting that $R'$ is connected. If both go the other way, we see that an isotopy of $R^-$ and $T$ could reduce $R^-\cap T$ without changing $R'$, contradicting the choice of $R^-$ and $T$. If only one joins the two annuli, an isotopy along the product region reduces the weight of $R'$, again giving a contradiction.

Thus $S$ is not $\partial$--parallel in $M_1$. 
Note that the part of $\Sphere\setminus \Int\nhd(T)$ containing $L$ is a solid torus $V$. 
Let $K$ be the core curve of $V$.
Since $R\subset V$ and $T$ is incompressible, $L$ is a satellite of $K$ with winding number 0.
Because $S$ is not parallel to $T$, the knot $K$ satisfies the hypotheses of Proposition \ref{knotannulusprop}.
\end{proof}

\bibliography{locallyinfiniterefs}
\bibliographystyle{hplain}

\bigskip
\noindent
Mathematical Institute

\noindent
University of Oxford

\noindent
24--29 St Giles'

\noindent
Oxford OX1 3LB

\noindent
England

\smallskip
\noindent
\textit{jessica.banks[at]lmh.oxon.org}

\end{document}